\title[Bilipschitz Equivalence of Trees and Hyperbolic Fillings]{Bilipschitz Equivalence of Trees and Hyperbolic Fillings} 
\author{Jeff Lindquist}
\email{JLindquist@math.ucla.edu}
\address{Department of Mathematics, University of California, Los Angeles,  Box 95155, Los Angeles, CA, 90095-1555, USA}
\thanks{The author was partially supported by NSF grants DMS-1506099 and DMS-1162471.}
\newtheorem{thm}{Theorem}[section]
\newtheorem{lemma}[thm]{Lemma}
\newtheorem{cor}[thm]{Corollary}
\theoremstyle{definition}
\newtheorem{defn}[thm]{Definition}
\newtheorem*{ack}{Acknowledgements}
\newtheorem{remark}[thm]{Remark}
\newcommand{\N}{\mathbb{N}}
\newcommand{\Z}{\mathbb{Z}}
\newcommand{\R}{\mathbb{R}}
\DeclareMathOperator{\diam}{diam}
\def\blfootnote{\gdef\@thefnmark{}\@footnotetext}
\begin{document}
\maketitle


\begin{abstract}
We combine conditions found in \cite{Wh} with results from \cite{MPR} to show that quasi-isometries between uniformly discrete bounded geometry spaces that satisfy linear isoperimetric inequalities are within bounded distance to bilipschitz equivalences.  We apply this result to regularly branching trees and hyperbolic fillings of compact, Ahlfors regular metric spaces.
\end{abstract}

\blfootnote{This work is based on work from the author's thesis.  \\ \indent Mathematics Subject Classification: 30C65.  Secondary: 52C99, 	05C63.}

\section{Introduction}  

In this note we combine results from \cite{Wh} with results from \cite{MPR} both to generalize a theorem of Papasoglu \cite{Pa} and to prove that the vertex sets of hyperbolic fillings of quasisymmetric, compact, Ahlfors regular metric spaces are bilipschitz equivalent.  Papasoglu in \cite{Pa} proves that (the vertices of) $k$-ary homogeneous trees are bilipschitz equivalent whenever $k \geq 3$.  A map between metric spaces $f \colon (X,d_X) \to (Y, d_Y)$ is a {\em bilipschitz equivalence} if it is a bijection such that there exists a constant $C > 0$ such that for all $x, x' \in X$ we have 
\begin{equation*}
\frac{1}{C} d_X(x, x') \leq d_Y(f(x), f(x')) \leq C d_X(x, x').
\end{equation*}
To view a connected graph $X = (V_X, E_X)$ as a metric space, we use the {\em graph metric}.  This means each edge of $X$ is taken to be isometric to an interval of length $1$.  For $x, x' \in V_X$, it follows that the quantity $d_X(x,x')$ is the fewest number of edges required to connect $x$ to $x'$.

Bilipschitz equivalence is a strong property that is not immediate in many situations. One has the weaker notion of a quasi-isometry which is a map that is bilipschitz at large scales.  More formally, a map between metric spaces $f \colon (X,d_X) \to (Y, d_Y)$ is a {\em quasi-isometry} if there exist constants $C, D > 0$ such that for all $x, x' \in X$ we have
\begin{equation*}
\frac{1}{C} d_X(x, x') - D \leq d_Y(f(x), f(x')) \leq C d_X(x, x') + D
\end{equation*}
and such that every point in $y$ is within distance $C$ of $f(X)$.  A natural question to ask is whether a quasi-isometry can be promoted to a bilipschitz equivalence under the right conditions.  A positive answer is given  by Whyte \cite{Wh} who showed that a quasi-isometry between $UDBG$ spaces is within bounded distance to a bilipschitz equivalence if a certain homological condition holds.  Here a $UDBG$ space is a metric space that is uniformly discrete with bounded geometry.  A metric space $(X,d)$ is {\em uniformly discrete} if there is a constant $c>0$ such that for all $x, x' \in X$ with $x \neq x'$ we have $d(x, x') > c$.  A metric space $(X,d)$ is said to have {\em bounded geometry} if it is uniformly discrete and, for all $r>0$ there is a constant $N_r > 0$ such that for all $x \in X$ we have $|B(x, r)| \leq N_r$.  Here and elsewhere, if $A$ is a set then $|A|$ denotes the cardinality of $A$.

Whyte's results involve boundary estimates which are reminiscent of linear isoperimetric inequalities.  Let $X = (V_X, E_X)$ be a graph.  For a subset $A \subseteq V_X$, we define the boundary of $A$ as $\partial A = \{ x \in V_X : x \notin A \text{ and } d(x,A) \leq 1 \}$.  To say $X$ (or $V_X$) satisfies a {\em linear isoperimetric inequality} means there is a constant $C > 0$ such that for all finite $A \subseteq V_X$ we have $|A| \leq C |\partial A|$.  We apply the results in \cite{Wh} to prove the following theorem.  

\begin{thm}
\label{main Theorem}
Let $X$ and $Y$ be connected graphs with the graph metric.  Suppose $X$ and $Y$ are quasi-isometric,  have bounded degree, and satisfy linear isoperimetric inequalities.  Then, the vertex sets $V_X$ and $V_Y$ are bilipschitz equivalent.  Moreover, if $f \colon X \to Y$ is a quasi-isometry, then there is a bilipschitz equivalence within bounded distance of $f|_{V_X}$.
\end{thm}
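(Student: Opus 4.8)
The plan is to recognize $V_X$ and $V_Y$ as $UDBG$ spaces, to pass the given quasi-isometry to a quasi-isometry of vertex sets, and then to invoke Whyte's upgrading criterion from \cite{Wh}, whose hypothesis I will verify from the linear isoperimetric inequalities together with the results of \cite{MPR}. First I would check that $V_X$ and $V_Y$ are $UDBG$: distinct vertices of a connected graph are at graph distance at least $1$, so both vertex sets are uniformly discrete, and if every vertex has degree at most $\Delta$ then a ball of radius $r$ contains at most $1 + \Delta + \cdots + \Delta^{\lceil r \rceil}$ vertices, a bound independent of the center, giving bounded geometry. Then, given a quasi-isometry $f \colon X \to Y$, I would postcompose $f|_{V_X}$ with a nearest-vertex projection $\pi \colon Y \to V_Y$; since every point of $Y$ lies within distance $\tfrac{1}{2}$ of a vertex, $\pi$ displaces points boundedly, so $\pi \circ f|_{V_X} \colon V_X \to V_Y$ is a quasi-isometry of $UDBG$ spaces lying within bounded distance of $f|_{V_X}$, and it suffices to upgrade this map.

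By Whyte's theorem \cite{Wh}, $\pi \circ f|_{V_X}$ is within bounded distance of a bilipschitz equivalence once the relevant uniformly finite homology condition holds, namely that the map carries the fundamental class of $V_X$ to that of $V_Y$. Here the linear isoperimetric inequality is decisive: for a bounded-degree graph it is exactly the assertion that the vertex set is non-amenable, that is, that its Cheeger constant is positive. For a non-amenable $UDBG$ space the reduced zeroth uniformly finite homology vanishes, so the fundamental class is trivial and Whyte's condition is met automatically. The quantitative realization of this vanishing---the bounded integer-valued combinatorial flows, or equivalently the Hall-type matchings, that actually implement the bilipschitz correction---is what the results of \cite{MPR} supply from the positivity of the Cheeger constant.

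Assembling these steps yields a bilipschitz equivalence $g \colon V_X \to V_Y$, which gives the first assertion. Because $g$ is within bounded distance of $\pi \circ f|_{V_X}$, and the latter is within bounded distance of $f|_{V_X}$, the map $g$ is within bounded distance of $f|_{V_X}$, giving the ``moreover'' clause. I expect the main obstacle to be in the second paragraph: the isoperimetric inequality only controls boundaries of finite sets, whereas Whyte's criterion is homological and the actual construction requires \emph{integer} flows, so the crux is to convert the former, uniformly over all finite configurations, into the bounded flows witnessing triviality of the fundamental class---exactly the point where the machinery of \cite{MPR} is required.
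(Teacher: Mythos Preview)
Your overall strategy is correct and matches the paper's: reduce to a quasi-isometry of the $UDBG$ vertex sets, then apply Whyte's upgrading criterion by showing $f_*([V_X])=[V_Y]$, with the linear isoperimetric inequality supplying the needed vanishing. The paper carries this out slightly more concretely than you do: rather than quoting the general fact ``non-amenable $\Rightarrow H_0^{uf}=0$,'' it observes that the chain $c=f_*([V_X])-[V_Y]$ has uniformly bounded coefficients (say $|c_v|\le A$, from bounded degree plus the quasi-isometry constants), so for every finite $S\subseteq V_Y$ one has $\bigl|\sum_S c\bigr|\le A|S|\le Ah|\partial S|=Ah|\partial_1 S|$, and then invokes Whyte's Theorem~7.6 directly.

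Where you go astray is in the role you assign to \cite{MPR}. The passage from the isoperimetric inequality to the bounded integer flows witnessing $[c]=0$ is entirely internal to Whyte's paper (Theorem~7.6 and its proof); \cite{MPR} contributes nothing to that step and is not used at all in the proof of the main theorem. In this paper \cite{MPR} enters only in the corollaries, where it is used to \emph{verify} the hypothesis that particular graphs (pseudo-regular trees, hyperbolic fillings) satisfy a linear isoperimetric inequality, via their Theorem~4.15 relating $h(\Gamma)>0$ to uniform perfectness of $\partial_\infty\Gamma$. So your anticipated ``main obstacle'' is already handled by \cite{Wh}, and you should remove the references to \cite{MPR} from the proof of the theorem itself.
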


  This theorem allows us to generalize Papasoglu's result to more exotic trees that satisfy linear isoperimetric inequalities. Such trees were studied in the work of Mart\'inez-P\'erez and Rodr\'iguez \cite{MPR}.  Their results, together with a quasisymmetric characterization from \cite{DS}, yield the following corollary.

\begin{cor}
\label{Cor 1}
Let $X$ and $Y$ be rooted, pseudo-regular, visual trees of bounded degree with the graph metric.  Then, $V_X$ and $V_Y$ are bilipschitz equivalent.
\end{cor}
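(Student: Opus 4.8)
The plan is to deduce the corollary as a direct application of Theorem~\ref{main Theorem}, so the entire task reduces to verifying its hypotheses for the pair $X$, $Y$: connectedness, bounded degree, linear isoperimetric inequalities, and the existence of a quasi-isometry between the two trees. Connectedness is automatic since $X$ and $Y$ are trees, and bounded degree is assumed, so the real content is to supply the isoperimetric inequalities and a quasi-isometry. These are exactly the two inputs that I would extract from \cite{MPR} and \cite{DS}, respectively.

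For the isoperimetric inequalities I would appeal to the results of Mart\'inez-P\'erez and Rodr\'iguez in \cite{MPR}. The point is that pseudo-regularity forces branching to recur within bounded gaps, so the tree has exponential growth and is non-amenable; from this one checks (or extracts from \cite{MPR}) that for every finite vertex set $A$ the combinatorial boundary $\partial A$ is comparable to $|A|$, which is precisely the linear isoperimetric inequality as defined above. This is where the pseudo-regularity hypothesis is essential on this side: without it the conclusion can fail, since a single geodesic ray is a visual tree but is amenable and admits no such inequality.

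To produce a quasi-isometry I would pass to the boundary at infinity. With the graph metric each tree is Gromov hyperbolic, and its boundary $\partial_\infty X$ carries a visual metric under which it is compact. Pseudo-regularity together with bounded degree guarantee that $\partial_\infty X$ is uniformly perfect, uniformly disconnected, and doubling, in fact Ahlfors regular. By the quasisymmetric characterization of \cite{DS}, any such space is quasisymmetrically equivalent to the standard Cantor set; hence $\partial_\infty X$ and $\partial_\infty Y$ are quasisymmetric to one another, regardless of any discrepancy in their Ahlfors dimensions. A quasisymmetry of boundaries then lifts, via the standard correspondence for visual hyperbolic spaces, to a quasi-isometry $f \colon X \to Y$ sending geodesic rays to geodesic rays in a controlled way. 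With $f$ in hand, Theorem~\ref{main Theorem} applies and yields a bilipschitz equivalence $V_X \to V_Y$ within bounded distance of $f|_{V_X}$.

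I expect the main obstacle to lie in the two verifications feeding into \cite{DS} and into the lift. First, establishing uniform perfectness of $\partial_\infty X$ is the delicate step, and it is exactly where pseudo-regularity must be used to rule out slowly branching degenerations; uniform disconnectedness and doubling are comparatively routine for bounded-degree tree boundaries. Second, turning a boundary quasisymmetry into an honest quasi-isometry of trees must be done quantitatively enough that the quasi-isometry constants are uniform, so that the final invocation of Theorem~\ref{main Theorem} is legitimate. Both steps are standard in spirit but require care in matching the definitions of \cite{MPR} and \cite{DS} to the hyperbolic-filling viewpoint of this note.
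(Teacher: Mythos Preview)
Your proposal is correct and follows essentially the same route as the paper: establish a quasi-isometry by showing both boundaries are quasisymmetric to the Cantor set via the \cite{DS} characterization (Lemma~\ref{Tree qs}) and then lifting (Lemma~\ref{Tree qi}), obtain the linear isoperimetric inequality from \cite{MPR} (Theorem~\ref{MPR Theorem}, which goes through uniform perfectness of the boundary rather than a direct non-amenability argument), and invoke Theorem~\ref{main Theorem}. The only detail you leave implicit is that the lift in the paper first passes to the maximal geodesically complete subtree $T_\infty$ (Remark~\ref{Tree qi remark}) before applying the standard boundary-to-interior construction of \cite[Theorem~7.2.1]{BuS}.
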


Here a tree is rooted if it has a specified ``root'' vertex, pseudo-regular if it branches regularly, visual if it does not have arbitrarily long ``dead ends'', and of bounded degree if there is a uniform bound on the number of edges connecting to any particular vertex.  For precise definitions we refer the reader to Section \ref{Definitions and Preliminaries}.

A metric space $(X,d)$ is Ahlfors $Q$-regular if, for $\mu$ the Hausdorff $Q$ measure induced by $d$, there are constants $c, C > 0$ such that for all $0 < r \leq \diam(X)$ we have $c r^Q \leq \mu(B(x,r)) \leq C r^Q$.  Theorem \ref{main Theorem} has another corollary when one considers hyperbolic fillings of compact, Ahlfors regular metric spaces.

\begin{cor}\label{Cor 2}
Let $(Z, d_Z)$ and $(W, d_W)$ be quasisymmetrically equivalent, compact, Ahlfors regular metric spaces.  Let $X = (V_X, E_X)$ and $Y = (V_Y, E_Y)$ be hyperbolic fillings of $Z$ and $W$.  Then, $V_X$ and $V_Y$ are bilipschitz equivalent.
\end{cor}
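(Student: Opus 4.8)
The plan is to verify the three hypotheses of Theorem \ref{main Theorem} for the hyperbolic fillings $X$ and $Y$ and then invoke it. Recall that a hyperbolic filling of a compact Ahlfors $Q$-regular space $(Z,d_Z)$ is built from maximal $a^{-n}$-separated sets $V_n \subseteq Z$ (for a fixed parameter $a \geq 2$ and each $n \geq 0$), with vertical edges joining a vertex of $V_n$ to nearby vertices of $V_{n+1}$ and horizontal edges joining nearby vertices within a single $V_n$. Throughout I would lean on the two consequences of Ahlfors regularity that drive every estimate: first, the doubling property, which bounds the number of $a^{-n}$-separated points lying in any ball of radius comparable to $a^{-n}$ by a constant $N$ independent of $n$; and second, the lower mass bound, which forces any ball $B(v,a^{-n})$ to require at least $c_0 a^{Q}$ balls of radius $a^{-(n+1)}$ to be covered.

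First I would record that $X$ has bounded degree: the number of horizontal neighbours of $v \in V_n$ and the number of its vertical neighbours in $V_{n-1} \cup V_{n+1}$ are all controlled by the doubling constant $N$, uniformly in $n$, and connectedness is immediate from the construction; the same holds for $Y$. Next I would produce the quasi-isometry. The point is that the filling is a visual Gromov hyperbolic graph whose boundary at infinity, equipped with a visual metric, is quasisymmetrically equivalent to $(Z,d_Z)$, and likewise for $Y$ and $(W,d_W)$. A quasisymmetry $\phi \colon Z \to W$ thus induces a quasisymmetry between the boundaries, and by the standard correspondence between quasisymmetries of boundaries and quasi-isometries of visual hyperbolic spaces this lifts to a quasi-isometry $F \colon X \to Y$. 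Concretely one may build $F$ level by level: quasisymmetry controls the ratios of diameters of images of balls, so $\phi\big(B(v,a^{-n})\big)$ has diameter comparable to $a^{-m}$ for an integer $m=m(v)$ with $|m-n|$ bounded, and sending $v$ to a vertex of $V_m$ near $\phi(v)$ gives the required map.

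The main obstacle is the linear isoperimetric inequality, i.e. non-amenability of $X$. The lower mass bound gives genuine downward expansion: each $v \in V_n$ has at least $c_0 a^{Q}$ children in $V_{n+1}$, while the doubling property shows that a vertex of $V_{n+k}$ has at most $N$ ancestors in $V_n$ — a bound independent of $k$, since over $k$ descending steps the centre drifts by at most $a^{-n}\sum_{i\geq 1} a^{-i} \leq a^{-n}$. Hence the $k$-step descendant set of any $A_n := A \cap V_n$ has cardinality at least $(c_0 a^{kQ}/N)\,|A_n|$; choosing $k$ so that $\lambda := c_0 a^{kQ}/N > 1$ yields uniform expansion of each level under descent, valid for every parameter $a \geq 2$.

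I would then convert this uniform expansion into the bound $|A| \leq C\,|\partial A|$ for arbitrary finite $A \subseteq V_X$ by a flow/Hall-type argument: the expansion lets one route the mass of $A$ into $\partial A$ with bounded multiplicity, the horizontal edges only enlarging $\partial A$. The delicate point is that $A$ may be spread across infinitely many levels and need not be downward closed, so the descent estimate must be applied uniformly across levels and the resulting routes combined without excessive overlap; this bookkeeping, rather than the underlying expansion, is where the real work lies. With all three hypotheses in hand, Theorem \ref{main Theorem} gives that $V_X$ and $V_Y$ are bilipschitz equivalent.
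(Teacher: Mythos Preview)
Your high-level strategy matches the paper's: verify the hypotheses of Theorem~\ref{main Theorem} and invoke it. The paper's proof is much shorter because it outsources the two nontrivial hypotheses. For the quasi-isometry it simply cites \cite[Theorem~7.2.1]{BuS}, and for the linear isoperimetric inequality it observes that Ahlfors regular spaces are uniformly perfect and then applies Theorem~\ref{MPR Theorem} (i.e.\ \cite[Theorem~4.15]{MPR}) as a black box. You instead try to prove both from scratch, which is a legitimate and more self-contained route; your expansion estimate (many descendants, boundedly many ancestors) is essentially the combinatorial content that underlies the proof of Theorem~\ref{MPR Theorem}.

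Two points deserve correction or completion. First, in your explicit description of the quasi-isometry the claim that $|m-n|$ is bounded is false in general: a quasisymmetry between Ahlfors regular spaces of different exponents (e.g.\ a snowflake map) sends scale $a^{-n}$ to roughly scale $a^{-\alpha n}$ with $\alpha\neq 1$, so the level shift grows linearly in $n$. What is true, and what the argument in \cite{BuS} actually uses, is that $|m(v)-m(v')|$ is uniformly bounded for \emph{adjacent} $v,v'$; together with the analogous estimate for $\phi^{-1}$ this yields the quasi-isometry. Second, you rightly flag the passage from level-wise expansion to a global bound $|A|\le C|\partial A|$ as ``where the real work lies'' and then leave it undone. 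This step is genuinely nontrivial for hyperbolic fillings (horizontal edges, sets that are not downward-closed, overlap control for the routing), and it is exactly the content of \cite{MPR} that the paper invokes. Unless you plan to reproduce that argument in full, it is cleaner to appeal to Theorem~\ref{MPR Theorem} directly, as the paper does.
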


A homeomorphism $f \colon (X, d_X) \to (Y, d_Y)$ between metric spaces is a {\em quasisymmetry} if there exists a homeomorphism $\eta \colon [0,\infty) \to [0,\infty)$ such that for all $x, y, z \in X$, we have
\begin{equation*}
\frac{d_Y(f(x), f(y))}{d_Y(f(x), f(z))} \leq \eta \biggl(\frac{d_X(x,y)}{d_X(x,z)} \biggr).
\end{equation*}
Two metric spaces are quasisymmetrically equivalent if there is a quasisymmetry from one onto the other.  The identity map from a space to itself is a quasisymmetry, so as a special case of Corollary \ref{Cor 2} we see that any two hyperbolic fillings of a given compact, Ahlfors regular metric space are bilipschitz equivalent.

%

Hyperbolic fillings are graph appromixations of metric spaces formed by covering the metric space with specific balls and connecting two balls with an edge if they overlap.  For precise constructions, we refer the reader to \cite{BuS}, \cite{BP}, and \cite{Li}.  It follows from the work in \cite{MPR} that hyperbolic fillings of these spaces satisfy a linear isoperimetric inequality and thus quasi-isometries may be promoted to bilipschitz equivalences in this setting.  As fillings of quasisymmetric spaces are quasi-isometric, the corollary follows.

Section \ref{Definitions and Preliminaries} contains the relevant definitions and preliminaries for the rest of the paper.  In Section \ref{sec results}, we state the relevant results from \cite{Wh} and \cite{MPR}.  In Section \ref{applications} we prove the results stated in the introduction, namely Theorem \ref{main Theorem} and its two corollaries.

\begin{ack}
The author thanks Mario Bonk for interesting discussions on the subject matter.
\end{ack}


\section{Definitions and Preliminaries} \label {Definitions and Preliminaries}
We make precise more of the definitions in the introduction.

For one of our applications we consider metric spaces which are rooted trees with some additional properties.  

\begin{defn}
A {\em rooted graph} is a graph with a distinguished vertex $v_0$.  If the graph is also a tree, we call this a {\em rooted tree}.
\end{defn}

\begin{defn} 
A rooted graph $T$ is {\em visual} if there is a constant $C > 0$ such that for every vertex $v \in V$, there is an infinite geodesic ray $I$ with endpoint $v_0$ such that $d(v, I) \leq C$.  In the language of \cite{MPR}, this means $v_0$ is a {\em pole} of $T$.
\end{defn}

\begin{defn}
The {\em degree} of a vertex $v$ is the number of edges $e$ eminating from $v$, which we write as $\deg(v)$.  A graph has {\em bounded degree} if there is a $\mu > 0$ such that for all $v \in V$ we have $\deg(v) \leq \mu$.
\end{defn}

In \cite{MPR} the combinatorial Cheeger isoperimetric constant $h(\Gamma)$ of a connected graph $\Gamma$ is defined.  This constant quantifies the existence of an isoperimetric inequality in $\Gamma$.

\begin{defn}
Given a connected graph $\Gamma = (V,E)$, we define the {\em combinatorial Cheeger isoperimetric constant} of $\Gamma$ as $h(\Gamma) = \inf_A |\partial A| / |A|$ where $A$ ranges over nonempty finite subsets of $V$.
\end{defn}

\begin{remark}
The graph $\Gamma$ satisfies a linear isoperimetric inequality if and only if $h(\Gamma) > 0$.
\end{remark}

We also wish to impose a condition on trees that forces regular branching.  In \cite{MPR} there is such a condition which they call {\em pseudo-regularity}.  To fully define this we need some notation, also borrowed from \cite{MPR}.  Given a tree $T$ and points $x,y \in T$, we let $[xy]$ denote the (unique) geodesic connecting $x$ to $y$.  For a fixed point $v \in T$ and any point $x \in T$, we define
\begin{equation*}
T_x^v = \{y \in T : x \in [vy]\}.
\end{equation*}

In the following, $S(v_0,t)$ is the sphere of radius $t$ centered at $v_0$ in the graph metric.

\begin{defn}
Given a rooted, visual tree $(T,v_0)$ and $K>0$ we say $(T,v_0)$ is {\em K-pseudo-regular} if for every $t \in \N$ and every $a \in S(v_0,t)$, there exist at least two points in $S(v_0, t+K) \cap T_a^{v_0}$.  We say $(T,v_0)$ is {\em pseudo-regular} if it is {\em K-pseudo-regular} for some $K$.
\end{defn}

\begin{remark}
Even though pseudo-regular trees are rooted and visual by definition, we will refer to rooted, pseudo-regular, visual trees for emphasis.
\end{remark}

We will use the {\em end space} definition from \cite{MPR} of the boundary at infinity of a tree.  For this, let $(T,v_0)$ be a rooted tree.

\begin{defn}
The {\em end space} of the rooted tree $(T, v_0)$ is
\begin{equation*}
end(T, v_0) = \{ F \colon [0,\infty) \to T : F(0) = v_0 \text{ and } F \text{ is an isometric embedding}\}.
\end{equation*}
We define the Gromov product at infinity of two elements $F,F' \in end(T,v_0)$ as 
\begin{equation*}
(F|F')_{v_0} = \sup \{t \geq 0 : F(t) = F'(t) \}.
\end{equation*}
We then define a metric $d = d_{v_0}$ on $end(T,v_0)$ by $d(F,F') = e^{-(F|F')_{v_0}}$.  Here $d$ is actually an ultrametric.  This means if $F,G,H \in end(T,v_0)$, then $d(F,H) \leq \max(d(F,G),d(G,H))$.  We often write $\partial_\infty T$ for $end(T,v_0)$.
\end{defn}

In \cite{DS}, there is a characterization of metric spaces that are quasisymmetrically equivalent to the standard $1/3$ Cantor set, denoted here as $C_{1/3}$.

\begin{thm}[\cite{DS}, Theorem 15.11]\label{Cantor qs}
A compact metric space $(X,d)$ is quasisymmetrically equivalent to $C_{1/3}$ if it is bounded, complete, doubling, uniformly perfect, and uniformly disconnected.
\end{thm}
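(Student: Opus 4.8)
The plan is to realize $(X,d)$ as the boundary of a branching tree whose combinatorial structure is controlled by the five hypotheses, and then to compare that boundary, as a metric space, with $C_{1/3}$. First I would record that completeness, boundedness and the doubling property together force $X$ to be compact, and that uniform perfectness forces $X$ to be infinite with no isolated points. The remaining two hypotheses, uniform disconnectedness and doubling, will then be used to build a dyadic-type decomposition of $X$.

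Fix a scale factor $\rho \in (0,1)$ and, for each $n \in \N$, partition $X$ into its $(c\rho^{n})$-chain-connected components, where $c$ is the uniform disconnectedness constant; call this partition $\mathcal{P}_n$. Because a $\delta'$-chain is also a $\delta$-chain whenever $\delta' \le \delta$, the partitions are automatically nested, with $\mathcal{P}_{n+1}$ refining $\mathcal{P}_n$, and compactness makes each $\mathcal{P}_n$ finite. Uniform disconnectedness confines each level-$n$ piece to a ball of radius $\rho^n$, so its diameter is at most $2\rho^n$, while distinct pieces of $\mathcal{P}_n$ are separated by at least $c\rho^n$. Uniform perfectness supplies the matching lower bound: every level-$n$ piece $Q$ contains, for any $x \in Q$, a point at distance comparable to $\rho^n$ from $x$, so $\diam Q$ is comparable to $\rho^n$. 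The doubling property then bounds the number of $(c\rho^{n+1})$-separated pieces inside a set of diameter $2\rho^n$, giving a uniform bound $N$ on the number of children of each piece; and choosing $\rho$ small (depending on $c$ and the uniform perfectness constant $\lambda$) forces the children to be strictly smaller than their parent, hence each piece has at least two children.

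I would then let $T$ be the rooted tree whose level-$n$ vertices are the pieces of $\mathcal{P}_n$, with each $Q' \in \mathcal{P}_{n+1}$ joined to its parent $Q \in \mathcal{P}_n$. By the previous step $T$ is a bounded-degree, visual tree whose branching lies between $2$ and $N$ at every level, and completeness identifies its end space $\partial_\infty T$ with $X$ as a set: a nested sequence of pieces has diameters tending to $0$, hence meets in a single point of $X$. The diameter and separation estimates say precisely that $d(x,y)$ is comparable to $\rho^{n(x,y)}$, where $n(x,y)$ is the deepest level at which $x$ and $y$ lie in a common piece. Since $\rho^{n(x,y)}$ is a fixed positive power of the standard end-space ultrametric $e^{-(F|F')_{v_0}}$, and since the snowflake map $t \mapsto t^{s}$ is a quasisymmetry, this shows $(X,d)$ is quasisymmetric to $(\partial_\infty T, d_{v_0})$.

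It remains to compare $(\partial_\infty T, d_{v_0})$ with $C_{1/3}$, which is itself quasisymmetric to the end space of the regular binary tree, so it suffices to show that the end spaces of any two rooted trees with branching in a bounded range $[2,N]$ are quasisymmetric. I would do this combinatorially: match $T$ with the binary tree by a level-grouping morphism, using the bounded branching range to absorb the mismatch in branching by collapsing or subdividing a uniformly bounded number of levels at a time. The induced boundary bijection distorts meet-levels by a bounded affine amount, of the form $|m(fx,fy) - a\,n(x,y)| \le b$, and for ultrametric spaces such rough-affine control of the Gromov products is exactly a quasisymmetry; composing the two quasisymmetries yields one from $X$ onto $C_{1/3}$. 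The main obstacle is this last step: producing a single morphism that is uniform over the whole tree, so that the meet-level distortion constants $a$ and $b$ do not degenerate as one passes to deeper levels. The decomposition in the earlier steps is technically fiddly but routine, whereas keeping the tree-matching quantitatively uniform is where the real work lies.
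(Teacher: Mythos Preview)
The paper does not contain a proof of this statement at all: Theorem~\ref{Cantor qs} is quoted from \cite[Theorem 15.11]{DS} and is used as a black box (in Lemma~\ref{Tree qs}) without any argument supplied here. So there is no ``paper's own proof'' to compare your proposal against.

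As a standalone sketch of how one might prove the David--Semmes theorem, your outline is reasonable and in the right spirit. Building nested partitions from $\delta$-chain components, reading off a bounded-degree tree with branching in $[2,N]$, and then identifying $X$ with the end space up to quasisymmetry is a standard route; the equivalence ``uniformly disconnected $\Leftrightarrow$ $\delta$-chains have diameter $\lesssim \delta$'' that you rely on in the second paragraph is exactly \cite[Lemma 15.2]{DS}. The step you flag as the main obstacle---matching two bounded-branching trees so that meet-levels are distorted only affinely---is genuine but well known; one clean way is to interleave with the infinite binary tree by grouping at most $\lceil \log_2 N \rceil$ levels of $T$ per binary level, which gives uniform constants immediately. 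One point to tighten: your claim that ``choosing $\rho$ small \dots\ forces the children to be strictly smaller than their parent, hence each piece has at least two children'' needs uniform perfectness, not just the size comparison, to actually produce a second child---a piece could in principle shrink in diameter while remaining a single chain-component unless you invoke the existence of a point at distance $\asymp \rho^n$ that lies in a \emph{different} $(c\rho^{n+1})$-component. You have the ingredient (uniform perfectness) in hand; just make sure it is applied at that spot rather than only for the diameter lower bound.
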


From \cite[Definition 15.1]{DS}, a metric space $(X,d)$ is {\em uniformly disconnected} if there is a constant $C > 0$ such that for each $x \in X$ and $r > 0$ there is a closed subset $A \subseteq X$ with $B(x, r/C) \subseteq A \subseteq B(x,r)$ and $d(A, X \setminus A) \geq r/C$.  Ultrametric spaces are uniformly disconnected by \cite[Proposition 15.7]{DS}.

From \cite[Definition 3.17]{MPR}, a metric space $(X,d)$ is {\em uniformly perfect} if there are constants $C > 1$ and $R > 0$ such that for every $x \in X$ and $0 < r < R$ there is an $x' \in X$ with $r/C < d(x,x') \leq r$.  

A metric space $(X,d)$ is {\em doubling} if there exists a constant $N > 0$ such that every ball $B(x,r) \subseteq X$ can be covered by at most $N$ balls of radius $r / 2$.

\begin{remark} \label {compact free} Complete, bounded, doubling metric spaces are compact as bounded, doubling metric spaces are totally bounded. 
\end{remark}

For Corollary \ref{Cor 1} we are concerned with rooted, pseudo-regular, visual trees of bounded degree.  We now prove the end spaces of such trees satisfy the criteria in Theorem \ref{Cantor qs}.

\begin{lemma} \label{Tree qs}
Let $(T,v_0)$ be a rooted, pseudo-regular, visual tree with bounded degree. Then, $\partial_\infty T$ is quasisymmetrically equivalent to $C_{1/3}$.
\end{lemma}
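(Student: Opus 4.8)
The plan is to verify that $\partial_\infty T$, with the ultrametric $d(F,F')=e^{-(F|F')_{v_0}}$, meets each hypothesis of Theorem \ref{Cantor qs}: that it is bounded, complete, doubling, uniformly perfect, and uniformly disconnected. Three of these are essentially immediate. Since $F(0)=F'(0)=v_0$ for all ends, $(F|F')_{v_0}\geq 0$ and hence $d(F,F')\leq 1$, so the space is bounded. As $d$ is an ultrametric, uniform disconnectedness is given by \cite[Proposition 15.7]{DS}. For completeness I would first note that the Gromov product of two distinct ends is a nonnegative integer: two geodesic rays from $v_0$ coincide on a maximal initial segment and then leave a common vertex along distinct edges. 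Thus $d$ takes values in $\{0\}\cup\{e^{-j}:j\in\N\cup\{0\}\}$, so a Cauchy sequence of ends agrees on ever longer initial segments, and these nested agreements assemble into a single geodesic ray from $v_0$, the limit. (Bounded degree makes $T$ locally finite, so in fact $\partial_\infty T$ is compact.) It remains to establish doubling and uniform perfectness, which is exactly where bounded degree and pseudo-regularity enter.

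For doubling I would use that balls are subtrees. Two ends lie within distance $e^{-n}$ precisely when they agree up to level $n$, so the closed ball $\bar B(F,e^{-n})$ is the set of ends passing through the vertex $F(n)$. Given $r>0$, let $n$ be the least integer with $e^{-n}<r$; then $B(F,r)=\bar B(F,e^{-n})$ consists of the ends through $w:=F(n)$. Every such end continues through one of the at most $\deg(w)\leq\mu$ children of $w$, and the ends through a fixed child form a ball of radius $e^{-(n+1)}<r/2$. Hence $B(F,r)$ is covered by at most $\mu$ balls of radius $r/2$, and $\partial_\infty T$ is doubling with a constant depending only on $\mu$.

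The main obstacle is uniform perfectness, which is where $K$-pseudo-regularity is used. Given an end $F$ and $0<r<1$, set $n$ to be the least integer with $e^{-n}<r$ and write $v=F(n)$. Pseudo-regularity supplies two distinct points $a,b\in S(v_0,n+K)\cap T_v^{v_0}$, and at most one of them equals $F(n+K)$, so I may take $a\neq F(n+K)$. The geodesics $[va]$ and $F|_{[n,n+K]}$ then share an initial segment and separate at some vertex $u=F(m)$ with $n\leq m<n+K$, the geodesic to $a$ leaving $u$ through a child $c\neq F(m+1)$. Iterating pseudo-regularity from $a$ shows the subtree below $c$ reaches arbitrarily high levels, and since $T$ is locally finite K\"onig's lemma produces an infinite geodesic ray from $v_0$ agreeing with $F$ up to $u$ and then following $c$; this is the desired end $F'$. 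Because $(F|F')_{v_0}=m\in[n,n+K)$, we get $e^{-n}\geq d(F,F')>e^{-(n+K)}$, and combining with $e^{-n}<r\leq e^{-(n-1)}$ yields $r/e^{K+1}<d(F,F')\leq r$, so $\partial_\infty T$ is uniformly perfect with constant $e^{K+1}$. The delicate point, and the reason the argument requires genuine input from the hypotheses, is arranging the separation from $F$ to occur at a controlled level on $F$ itself and then promoting the divergent branch to an honest end; this is precisely where pseudo-regularity and local finiteness are used. With all five properties in hand, Theorem \ref{Cantor qs} gives the conclusion.
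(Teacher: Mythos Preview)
Your proof is correct and follows the same strategy as the paper: verify the five hypotheses of Theorem~\ref{Cantor qs} and invoke it. The only differences are cosmetic---the paper cites \cite[Proposition~3.3]{MPR} for completeness/boundedness and \cite[Proposition~3.20]{MPR} for uniform perfectness where you supply direct arguments, and your doubling argument descends one level (covering by $\mu$ balls) rather than the paper's two levels (covering by $\mu^2$ balls).
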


\begin{proof}
With Remark \ref{compact free} and \cite[Theorem 15.11]{DS} it suffices to show $\partial_\infty T$ is bounded, complete, doubling, uniformly perfect, and uniformly disconnected. 

From \cite[Proposition 3.3]{MPR} $\partial_\infty T$ is a complete, bounded ultrametric space.  Thus, $\partial_\infty T$ is also uniformly disconnected by \cite[Proposition 15.7]{DS}. 

The fact that $\partial_\infty T$ is uniformly perfect follows from the fact that $(T,v_0)$ is pseudo-regular.  This is proven in \cite[Proposition 3.20]{MPR}.  

We show $\partial_\infty T$ is doubling.  Let $B(F,r)$ be a ball in $\partial_\infty T$.  Let 
\begin{equation*}
M = \sup \{m : F(k)=G(k)\text{ for all } G \in B(F,r), k \leq m\}.
\end{equation*}
If $M = \infty$, then $B(F,r)$ consists of a single point, so in this case it can be covered by one ball of radius $r/2$.  Otherwise, $r \geq e^{-(M+1)}$ and so $r/2 \geq e^{-(M+2)}$.  Let $\deg(v) \leq \mu$ for $v \in V$.  Hence, if $G \in B(F, r)$, then $G(M) = F(M)$ and there are at most $\mu^2$ possibilities for $G(M+2)$.  If $G, H \in B(F,r)$ are such that $G(M+2) = H(M+2)$, then $d(G,H) \leq e^{-(M+2)} \leq r/2$.  It follows that $B(F,r)$ is contained in $\mu^2$ balls of radius $r/2$.
\end{proof}

The quasisymmetries induced on the end spaces of these trees give rise to quasi-isometries between the trees themselves (Lemma \ref{Tree qi}).  The proof strategy is to show maximal geodesically complete subtrees are quasi-isometric first, and that these subtrees are quasi-isometric to the original trees.  We follow the terminology in \cite{MPR}.

\begin{defn}
A rooted tree $(T,v_0)$ is {\em geodesically complete} if whenever $f \colon [0,t] \to T$ is an isometric embedding with $f(0) = v_0$, there is an isometric embedding $F \colon [0,\infty) \to T$ such that $f(s) = F(s)$ for all $s \in [0,t]$.
\end{defn}

\begin{defn}
Given a rooted tree $(T,v_0)$, we define $(T_\infty,v_0)$ as the unique geodesically complete subtree with the same root $v_0$ that is maximal under inclusion.
\end{defn}

The fact that such a tree exists and is unique follows from an application of Zorn's Lemma, see \cite[Theorem 10.1]{MPM}.  

\begin{remark} \label{Tree qi remark}
We note that if $(T,v_0)$ is visual then from \cite[Proposition 3.8]{MPR} it follows that there is a quasi-isometry $(T,v_0) \to (T_\infty,v_0)$.
\end{remark}

\begin{lemma} \label{Tree qi}
Let $(T,v_0)$ and $(U,w_0)$ be rooted, pseudo-regular, visual trees with bounded degree.  Then $T$ and $U$ are quasi-isometric.
\end{lemma}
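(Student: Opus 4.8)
The plan is to reduce to the geodesically complete case, transport a quasisymmetry of the boundaries into the picture, and then build a quasi-isometry of the trees directly from the combinatorial structure of their boundary balls. First I would pass to the maximal geodesically complete subtrees. Since $(T,v_0)$ and $(U,w_0)$ are visual, Remark \ref{Tree qi remark} supplies quasi-isometries $(T,v_0)\to(T_\infty,v_0)$ and $(U,w_0)\to(U_\infty,w_0)$. As quasi-isometries compose and admit coarse inverses, it suffices to prove $(T_\infty,v_0)$ and $(U_\infty,w_0)$ are quasi-isometric. Moreover, every isometric embedding $F\colon[0,\infty)\to T$ with $F(0)=v_0$ is an infinite geodesic ray and hence lands in the geodesically complete subtree, so $\partial_\infty T_\infty=\partial_\infty T$ with the same ultrametric, and likewise for $U$. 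By Lemma \ref{Tree qs} both $\partial_\infty T_\infty$ and $\partial_\infty U_\infty$ are quasisymmetric to $C_{1/3}$, and since the class of quasisymmetries is closed under inversion and composition I obtain a quasisymmetry $h\colon \partial_\infty T_\infty \to \partial_\infty U_\infty$.

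Next I would convert $h$ into a quasi-isometry using the dictionary between vertices and boundary balls. For a geodesically complete tree, each vertex $v$ at level $n=d(v_0,v)$ determines the cylinder $C_v=\{F\in\partial_\infty T_\infty : F(n)=v\}$; by the ultrametric property $C_v$ is exactly the closed ball of radius $e^{-n}$ about any end through $v$, the cylinders at a fixed level partition the boundary, and for suitable ends $F\supseteq[v_0v]$, $F'\supseteq[v_0v']$ the Gromov product $(F|F')_{v_0}$ equals the level of the branch vertex $b$ of $v,v'$. Consequently the tree metric is read off from the boundary via $d(v,v')=d(v_0,v)+d(v_0,v')-2(F|F')_{v_0}$. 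I would then define $\phi\colon V_{T_\infty}\to V_{U_\infty}$ by sending $v$ to a vertex $w$ whose cylinder $C_w$ best matches the image set $h(C_v)$, for instance the deepest $w$ with $h(C_v)\subseteq C_w$. Bounded degree guarantees this assignment is coarsely well defined and coarsely surjective.

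The main obstacle, and the technical heart of the argument, is to show $\phi$ is a quasi-isometry, that is, to upgrade the multiplicative, ratio-based control of the quasisymmetry $h$ to the additive control on Gromov products that the distance formula demands. Here I would use that both boundaries are uniformly perfect and doubling, as established in the proof of Lemma \ref{Tree qs}: uniform perfectness forces balls to shrink at a definite rate, so that $h$ can neither collapse nor manufacture scales, pinning $d(w_0,\phi(v))$ to within an additive constant of a fixed linear function of $d(v_0,v)$, while doubling together with bounded degree bounds how the children of a cylinder may proliferate under $h$. Feeding the quasisymmetry inequality for $h$ through the identity $d(F,F')=e^{-(F|F')_{v_0}}$ converts ratios of boundary distances into differences of branch levels and yields $(hF|hF')_{w_0}=(F|F')_{v_0}+O(1)$; substituting back into the distance formula gives $\tfrac1C d(v,v')-D\le d(\phi(v),\phi(v'))\le C\,d(v,v')+D$. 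Finally, composing the quasi-isometries $T\to T_\infty$, then $\phi\colon T_\infty\to U_\infty$, and then a coarse inverse $U_\infty\to U$ proves the lemma.
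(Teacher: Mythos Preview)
Your overall architecture matches the paper's sketch exactly: pass to $T_\infty$ and $U_\infty$ via Remark \ref{Tree qi remark}, invoke Lemma \ref{Tree qs} to obtain a quasisymmetry $h$ of the boundaries, and send each vertex $v$ to the deepest $w$ with $h(C_v)\subseteq C_w$. This is precisely the map the paper defines (with $C_v$ written $B_v$ there).

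The gap is in how you verify the quasi-isometry inequality. The assertion $(hF|hF')_{w_0}=(F|F')_{v_0}+O(1)$ is equivalent, via $d(F,F')=e^{-(F|F')}$, to $h$ being bilipschitz on the boundary, and this is false for a general quasisymmetry. For instance, the boundaries of the $2$-regular and $4$-regular trees are quasisymmetric but have different Hausdorff dimensions, so no bilipschitz map between them exists; for the obvious quasisymmetry one gets $(hF|hF')\approx\tfrac12(F|F')$. In general a quasisymmetry between uniformly perfect spaces gives only two-sided H\"older control, so one obtains $\alpha\,(F|F')-C\le(hF|hF')\le\beta\,(F|F')+C$ with possibly $\alpha<\beta$; your companion claim that $d(w_0,\phi(v))$ agrees with a \emph{single} linear function of $d(v_0,v)$ up to $O(1)$ fails for the same reason. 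Once $\alpha\ne\beta$, substituting such bounds into $d(v,v')=|v|+|v'|-2(F|F')$ does not yield a quasi-isometry estimate because of cancellation: for adjacent $v,v'$ at levels $N$ and $N+1$ the resulting upper bound on $d(\phi(v),\phi(v'))$ contains a term of order $(\beta-\alpha)N$.

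The paper (following \cite[Theorem 7.2.1]{BuS}) avoids this by arguing locally rather than through the global distance formula. One shows directly that $|v-v'|\le 1$ forces $|\phi(v)-\phi(v')|\le C$: when $v'$ is a child of $v$ the containment $C_{v'}\subseteq C_v$ gives $C_{\phi(v')}\subseteq C_{\phi(v)}$, so $\phi(v')$ lies below $\phi(v)$ on the same ray, and a single application of the quasisymmetry inequality at a common boundary point bounds the depth difference. Building the reverse map from $h^{-1}$ in the same way and checking that the two maps are coarse inverses then finishes the proof, without ever invoking the formula $d(v,v')=|v|+|v'|-2(F|F')$.
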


It is important that the trees we work with are visual so as to apply Remark \ref{Tree qi remark}.  The construction of a quasi-isometry between $T_\infty$ and $U_\infty$ is given by \cite[Theorem 7.2.1]{BuS}, we provide the main idea of the construction here.

\begin{proof}[Sketch of proof of Lemma \ref{Tree qi}]
Let $\varphi \colon \partial_\infty T \to \partial_\infty U$ be a quasisymmetry.  This exists by Lemma \ref{Tree qs}.  From Remark \ref{Tree qi remark} it suffices to construct a quasi-isometry $f$ between $(T_\infty,v_0)$ and $(U_\infty,w_0)$.  For $v \in T_\infty$ set 
\begin{equation*}
B_v = \{F \in \partial_\infty T_\infty : v \in F([0,\infty))\}
\end{equation*}
and likewise define $B_w$ for $w \in U_\infty$.  Define $f(v) = w$ where $w \in U_\infty$ is a vertex of maximal distance from $w_0$ such that $\varphi(B_v) \subseteq B_{w}$ (such a vertex exists as $B_{w_0} = \partial_\infty U_\infty$).  We then show there is a constant $C>0$ such that if $v,v' \in T_\infty$ with $|v-v'| \leq 1$, then $|f(v) - f(v')| \leq C$.  From our tree structure we may assume without loss of generality that $B_{v'} \subseteq B_v$.  Let $w = f(v)$ and $w' = f(v')$.  We conclude that $B_{w'} \subseteq B_{w}$ and, by using a common point and the quasisymmetry condition, that there is a uniform bound on $|w - w'|$.  Constructing $g \colon (U_\infty,w_0) \to (T_\infty,v_0)$ similarly and checking that $f$ and $g$ are coarse inverses of one another completes the proof.
\end{proof}

We will also work with some homological terminology as in \cite{Wh}.  In what follows we define what is needed, sometimes adapting definitions to our more specific setting.

Given a connected graph $\Gamma=(V,E)$ with bounded degree and equipped with the graph metric, we define a {\em 0-chain} $c$ as a formal sum $c = \sum_{v \in V} c_v v$ with $c_v \in \Z$.  Likewise, a {\em 1-chain} $b$ is a formal sum $b = \sum_{e \in E} b_e e$ with $b_e \in \Z$.  We call a chain {\em bounded} if its coefficients are bounded.  Let $C_0^b (\Gamma)$ denote the set of bounded 0-chains and $C_1^b (\Gamma)$ the set of bounded 1-chains.  Given an orientation on $E$, meaning we view each edge as an ordered pair $e = (e_+, e_-)$, we define the boundary map $\partial \colon C_1^b(\Gamma) \to C_0^b(\Gamma)$ by defining $\partial e = e_+ - e_-$ and extending linearly.

For $r>0$, we define another graph $\Gamma_r = (V_r, E_r)$ where $V_r = V$ and 
\begin{equation*}
E_r = \{(x,y): x,y \in V, \  0 < d(x,y) \leq r\}.
\end{equation*}
This is the 1-dimensional subcomplex of the $r$-Rips complex as defined in \cite{Wh}.  In \cite{Wh} the uniformly finite homology is defined as a limit of the homology formed from $\Gamma_r$ as $r \to \infty$ and the sets of chains are denoted $C_0^{uf}(\Gamma)$ and $C_1^{uf}(\Gamma)$.  This means $C_0^{uf}(\Gamma) = C_0^b(\Gamma)$ and $C_1^{uf}(\Gamma) = \cup_{r > 0} C_1^b(\Gamma_r)$.  Note the uniformly finite homology does not require a graph structure; it can be defined for any $UDBG$ space.

We define an equivalence relation on $C_0^{uf}(\Gamma)$ by setting $c \sim c'$ if and only if there exists $b \in C_1^{uf}(\Gamma)$ such that $\partial b = c - c'$.  We let $[c]$ denote the equivalence class of $c$ under this relation.  The fundamental class $[V]$ is defined as the equivalence class of $\sum_{v \in V} v$.

\begin{remark}
One reason Whyte uses the Rips complex is that he has no graph structure.  In our situation (specifically in a graph with bounded degree), one obtains equivalent homology from the equivalence $c \sim c'$ if and only if there exists $b \in C^b_1(\Gamma)$ such that $\partial b = c - c'$.
\end{remark}

\section{Results from Whyte and Mart\'inez-P\'erez and Rodr\'iguez}
\label{sec results}
Here we state the results from \cite{Wh} and \cite{MPR} relevant for our setting.  We start with Whyte's criteria for promotion of a quasi-isometry to a bilipschitz equivalence.

\begin{thm}[\cite{Wh}, Theorem 4.1]\label{Whyte main}
Let $f \colon X \to Y$ be a quasi-isometry between UDBG spaces with $f_*([X]) =  [Y]$.  Then, there is a bilipschitz map at bounded distance from $f$.
\end{thm}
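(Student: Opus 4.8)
The plan is to promote $f$ to a genuine \emph{bijection} at bounded distance from it; the bilipschitz property then comes essentially for free. Indeed, if $g \colon X \to Y$ is a bijection with $\sup_{x} d_Y(f(x), g(x)) \le t < \infty$, then the two quasi-isometry inequalities for $f$ pass to $g$ at the cost of additive constants, and because $X$ and $Y$ are uniformly discrete these additive errors can be absorbed into the multiplicative constant. So the entire content of the theorem is the existence of such a bijection, and this is where the homological hypothesis $f_*([X]) = [Y]$ must be used.

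To construct the bijection I would phrase it as a matching problem. Fix a threshold $t$ larger than the quasi-isometry constants and form the bipartite graph $\mathcal{G}$ with vertex parts $X$ and $Y$, joining $x$ to $y$ precisely when $d_Y(f(x), y) \le t$. Bounded geometry makes $\mathcal{G}$ locally finite with uniformly bounded degrees, and a perfect matching of $\mathcal{G}$ (one saturating both parts) is exactly a bijection $g \colon X \to Y$ with $d_Y(f(x), g(x)) \le t$ for all $x$. By the form of Hall's theorem valid for locally finite bipartite graphs, such a matching exists if and only if the deficiency vanishes on both sides, i.e. $|N(A)| \ge |A|$ for every finite $A \subseteq X$ and $|N(B)| \ge |B|$ for every finite $B \subseteq Y$, where $N(\cdot)$ denotes neighborhoods in $\mathcal{G}$.

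The role of the hypothesis is to certify these deficiency conditions. Writing $\rho(y) = |f^{-1}(y)| - 1$ (a bounded function on $Y$, since the quasi-isometry lower bound together with bounded geometry forces $|f^{-1}(y)|$ to be uniformly bounded), the assumption $f_*([X]) = [Y]$ says exactly that $\rho = \partial \gamma$ for some bounded $1$-chain $\gamma$. By the Block--Weinberger characterization of boundaries in uniformly finite homology, this is equivalent to a uniform discrepancy estimate: there is a constant $K$ with $\bigl| \sum_{y \in S} \rho(y) \bigr| \le K |\partial S|$ for every finite $S \subseteq Y$. Since $\sum_{y \in S} \rho(y) = |f^{-1}(S)| - |S|$, this says the number of $X$-points landing in any region $S$ agrees with $|S|$ up to a controlled multiple of the size of $\partial S$, precisely the balance one needs to rule out an obstructing set of positive deficiency.

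The step I expect to be the main obstacle is turning this discrepancy estimate into the two Hall inequalities for a single fixed $t$. The naive attempt, thickening a violating set $S$ by $t$ and hoping the gained shell outweighs $K|\partial S|$, does not close, because a bounded $1$-chain can transport the surplus over long distances and the relevant annuli need not shrink. The correct route is contrapositive and uses max-flow/min-cut duality for the transportation network underlying $\mathcal{G}$: a persistent failure of the deficiency condition for every threshold would, after a finite-exhaustion and diagonal argument, manufacture a sequence of finite sets along which $\sum_S \rho$ grows faster than any multiple of $|\partial S|$, contradicting the Block--Weinberger bound and hence the hypothesis $f_*([X]) = [Y]$. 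This duality between the existence of the balancing flow $\gamma$ and the existence of the matching is the combinatorial heart of the theorem; granting it, Hall's theorem yields the bijection $g$, and the reduction in the first paragraph upgrades $g$ to a bilipschitz equivalence at bounded distance from $f$.
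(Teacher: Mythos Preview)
The paper does not supply its own proof of this statement; Theorem~\ref{Whyte main} is quoted from \cite{Wh} as a black box (alongside Theorem~\ref{0 chain}) and then invoked in Section~\ref{applications}. So there is nothing in the paper to compare your argument against.

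On the merits of the proposal itself: your reduction in the first paragraph (bijection at bounded distance $\Rightarrow$ bilipschitz, using uniform discreteness to absorb additive constants) and the bipartite Hall set-up in the second paragraph are correct and are exactly Whyte's route. The gap is the final paragraph. You rightly flag the verification of the two Hall deficiency conditions as the crux, but you do not actually carry it out: the appeal to ``max-flow/min-cut duality'' plus a ``finite-exhaustion and diagonal argument'' is a label for hoped-for machinery, not an argument, and in this context max-flow/min-cut is just a restatement of Hall, so invoking it gains nothing. Routing the proof through the Block--Weinberger inequality (your third paragraph, which is Theorem~\ref{0 chain} here) is also a detour that makes the Hall step harder, not easier: the hypothesis already hands you a concrete $b\in C_1^{uf}(Y)$ with $\partial b=f_*[X]-[Y]$, supported on edges of length at most some fixed $r$ with coefficients bounded by some $M$, and Whyte works directly with this object rather than with the derived inequality. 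One way to finish is to decompose $b$ (a bounded integer $1$-chain on the bounded-degree graph $\Gamma_r$) into a bounded-multiplicity family of oriented simple paths, cycles, and half-infinite tails, and then build the bijection by shifting along these one step at a time; the displacement is then at most $r$ and no separate Hall estimate is needed. Making that decomposition precise, or equivalently closing the Hall inequality for one specific $t$ chosen in terms of $r$, $M$, and the bounded-geometry constants, is exactly the work your last paragraph gestures at but does not do.
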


To apply this, we need a condition that implies $[f_*([X]) - Y]=0$.  This is achieved using an isoperimetric inequality and Theorem \ref{0 chain}.  Given a $UDBG$ metric space $(Z,d)$, a subset $S \subseteq Z$, and $r > 0$, we define the $r$-boundary of $S$ as the set $\partial_r(S) = \{z \in Z : z \notin S \text{ and } d(z,S) \leq r\}$.  Note that for vertex sets of graphs with distances induced from the graph metric, if $A$ is a set of vertices then $\partial_1 A = \partial A$.

\begin{thm}[\cite{Wh}, Theorem 7.6]
\label{0 chain}
Let $Z$ be a $UDBG$ space, $c \in C_0^{uf}(Z)$.  Then, $[c] = 0$ if and only if there are $r, C > 0$ such that for all finite $S \subseteq Z$ we have
\begin{equation*}
\bigl|\sum_S c\bigr| \leq C |\partial_r (S)|.
\end{equation*}
\end{thm}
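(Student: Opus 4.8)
The plan is to prove the two implications separately, with the reverse direction carrying essentially all of the difficulty. For the forward direction, suppose $[c]=0$, so that $c=\partial b$ for some $b\in C_1^{uf}(Z)$. By definition there is an $r>0$ with $b\in C_1^b(Z_r)$, and the coefficients are bounded, say $|b_e|\le M$ for all edges $e$ of $Z_r$. For a finite $S\subseteq Z$ I would compute $\sum_S c=\sum_{z\in S}(\partial b)_z=\sum_e b_e(\mathbf{1}[e_+\in S]-\mathbf{1}[e_-\in S])$ and observe that only edges with exactly one endpoint in $S$ contribute. Each such edge has its outside endpoint within distance $r$ of $S$, hence in $\partial_r(S)$. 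Since $Z$ has bounded geometry, every vertex of $Z_r$ meets at most $N_r$ edges, so the number of crossing edges is at most $N_r|\partial_r(S)|$. This yields $|\sum_S c|\le MN_r|\partial_r(S)|$, proving the estimate with $C=MN_r$.

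For the reverse direction, the first step is to convert the hypothesis into an edge-cut condition in the graph $Z_r$. Writing $\delta_r(S)$ for the set of edges of $Z_r$ with exactly one endpoint in $S$, each vertex $z\in\partial_r(S)$ satisfies $d(z,S)\le r$ and hence is joined by an edge of $Z_r$ to some vertex of $S$; these edges are distinct and lie in $\delta_r(S)$, so $|\partial_r(S)|\le|\delta_r(S)|$. Combined with the hypothesis this gives $|\sum_S c|\le C|\delta_r(S)|$ for every finite $S$. The goal is now to realize $c$ as the boundary of an integer $1$-chain $b$ on $Z_r$ with $|b_e|\le C$; this is exactly a feasibility statement for an integer flow on $Z_r$ with prescribed divergence $c$ and edge capacities $\pm C$, and the cut inequality just derived is the corresponding Hoffman/Hall-type feasibility condition.

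The main obstacle is that $Z$ is infinite, so I cannot apply a finite flow theorem directly. The plan is to exhaust $Z$ by finite sets $F_1\subseteq F_2\subseteq\cdots$ with $\bigcup_n F_n=Z$ and, for each $n$, to solve a finite circulation problem on the subgraph of $Z_r$ induced by $F_n\cup\partial_r(F_n)$: one requires $(\partial b^{(n)})_z=c_z$ for $z\in F_n$ while letting the divergence at the collar $\partial_r(F_n)$ be free (equivalently, adjoin an auxiliary sink joined to the collar). For each fixed $n$ the feasibility of this problem, with capacities $\pm C$ on the genuine edges, follows from the cut inequality above via the finite max-flow/circulation theorem, and crucially the bound $|b^{(n)}_e|\le C$ is uniform in $n$. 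Because $Z_r$ is locally finite and the coefficients are uniformly bounded integers, a diagonal (K\"onig's lemma) argument produces a subsequence of the $b^{(n)}$ converging edgewise to an integer $1$-chain $b\in C_1^b(Z_r)\subseteq C_1^{uf}(Z)$; since any fixed vertex $z$ lies in the interior $F_n$ for all large $n$, passing to the limit gives $(\partial b)_z=c_z$, that is $\partial b=c$ and $[c]=0$. The delicate points are the correct formulation of the finite problem so that the global cut condition furnishes finite feasibility, and the verification that the limit flow has the prescribed boundary at every vertex.
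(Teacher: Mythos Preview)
The paper does not contain its own proof of this statement: Theorem~\ref{0 chain} is simply quoted from \cite[Theorem~7.6]{Wh} in Section~\ref{sec results} and used as a black box, so there is no argument in the paper to compare your proposal against.

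That said, your outline is essentially the standard proof (due to Whyte, with roots in Block--Weinberger). The forward direction is correct as written. For the reverse direction, the strategy of reducing to an integer flow problem on $Z_r$ with capacities $\pm C$, solving it on a finite exhaustion via a max-flow/Hoffman feasibility theorem, and then extracting a limit by a diagonal argument is exactly the right one. The point you correctly flag as delicate is the formulation of the finite auxiliary problem: one must check that the global cut inequality $\bigl|\sum_S c\bigr|\le C\,|\delta_r(S)|$ actually yields the Hoffman condition for \emph{every} subset of the auxiliary finite graph, including those meeting the collar or the added sink. This does go through; one clean way is to give the collar vertices (or the sink edges) infinite capacity, so that any cut separating them has infinite capacity and the only constraining cuts are those with $S\subseteq F_n$, where your global inequality applies directly.
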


The main result from \cite{MPR} that concerns us is the following.

\begin{thm}[\cite{MPR}, Theorem 4.15]\label{MPR Theorem}
Let $\Gamma$ be a hyperbolic, rooted, visual graph of bounded degree.  Then, $h(\Gamma) > 0$ if and only if $\partial_\infty \Gamma$ is uniformly perfect for some visual metric.
\end{thm}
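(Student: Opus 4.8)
The statement is an equivalence between a combinatorial quantity (a linear isoperimetric inequality, $h(\Gamma)>0$) and a metric-geometric property of the boundary (uniform perfectness). The plan is to route both implications through a single geometric dictionary: the \emph{shadow correspondence} between vertices of $\Gamma$ and balls of $\partial_\infty\Gamma$. Fix a hyperbolicity constant $\delta$, the visual constant $C_0$, and a parameter $\epsilon>0$ small enough that $\rho(\xi,\eta)\asymp e^{-\epsilon(\xi|\eta)_{v_0}}$ defines a visual metric. To each vertex $v$ assign its shadow $\mathcal O(v)=\{\xi\in\partial_\infty\Gamma : \text{some ray } v_0\to\xi \text{ passes within } C_0 \text{ of } v\}$; using hyperbolicity one checks $\diam\mathcal O(v)\asymp e^{-\epsilon|v|}$, and conversely that every ball $B(\xi,r)$ is comparable to $\mathcal O(v)$ for a suitable $v$ at depth $\approx\frac{1}{\epsilon}\log(1/r)$. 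The visual hypothesis is exactly what guarantees $\mathcal O(v)\neq\emptyset$ for every $v$ (no vertex is wasted on a long dead end), so the dictionary loses no vertices.

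For the direction $h(\Gamma)>0 \Rightarrow$ uniform perfectness I would argue the contrapositive. Failure of uniform perfectness produces, for arbitrarily large $C$, a point $\xi$ and radius $r$ with no other boundary point at distance in $(r/C,r]$. Under the dictionary this is a \emph{gap}: the rays toward the nearby boundary points stay together between depths $n_1\approx\frac{1}{\epsilon}\log(1/r)$ and $n_2\approx\frac{1}{\epsilon}\log(C/r)$, so the cone over this segment is a thin tube. Taking $A$ to be this tube, $|A|\succeq n_2-n_1\asymp\frac{1}{\epsilon}\log C\to\infty$, while the key point is that $|\partial A|$ stays bounded: the two ends are bounded by $\delta$-thinness and bounded degree, and any ray adjacent to the tube but leaving it must (by thin triangles) have diverged before depth $n_1$, so it can touch the tube only within $O(\delta)$ of the shallow end. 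Hence $h(\Gamma)=0$.

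For the converse I would show uniform perfectness forces uniform branching, and uniform branching forces the isoperimetric inequality. Uniform perfectness says the shadow of every $v$ contains two boundary points separating within bounded extra depth $L\approx\frac{1}{\epsilon}\log C$; translated back, every cone splits into at least two disjoint subcones within $L$ steps. Grouping levels into blocks of height $L$ yields a coarse tree in which every vertex has at least two block-descendants. For finite $A$, declare a vertex \emph{full} if all of its block-descendants lie in $A$; since the number of child-edges inside a rooted component equals its size minus one, full vertices (contributing $\ge 2$ each) make up fewer than half the vertices, so at least half are \emph{deficient} and send a block-path of length $\le L$ to a vertex of $\partial A$. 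Bounded degree and bounded $L$ then convert this into $|A|\le C'|\partial A|$, i.e.\ $h(\Gamma)>0$.

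The main obstacle is that this dictionary is exact only for trees, whereas the theorem is for general hyperbolic graphs. The hard part is controlling the additive ($\delta,C_0$) and multiplicative ($\epsilon$) errors uniformly, so that a metric gap of ratio $C$ at infinity really corresponds to a non-branching tube of length $\asymp\frac{1}{\epsilon}\log C$ with \emph{bounded} total boundary (the thin-triangles estimate above is the crux), and conversely that the block-counting survives the passage from the exact tree to the coarse cone structure with only a bounded-degree loss in constants.
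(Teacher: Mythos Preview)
Your plan is sound and would work, but it is organized differently from the argument the paper summarizes (which is the proof in \cite{MPR}). The paper does not attack either implication by direct shadow geometry on $\Gamma$ itself. Instead it first replaces $\Gamma$ by a hyperbolic approximation $\Gamma'$ of $\partial_\infty\Gamma$; since $h>0$ is a quasi-isometry invariant and $\Gamma$ is quasi-isometric to $\Gamma'$, this is lossless, and it disposes at the outset of exactly the obstacle you flag at the end (your counting is clean only on trees, and the passage to general hyperbolic graphs needs care). For the implication ``uniformly perfect $\Rightarrow h>0$'' the paper then refines $\Gamma'$ to $\Gamma''$ on which the level function $f(v)=k$ has bounded gradient and a uniform positive lower bound on its combinatorial Laplacian; a standard analytic criterion then gives $h(\Gamma'')>0$. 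Your full/deficient block-counting is, morally, the same phenomenon---positive Laplacian of the level function is precisely ``more descendants than ancestors'', i.e.\ uniform branching---but packaged combinatorially rather than analytically. What the paper's route buys is that the reduction-to-approximation step absorbs all the $\delta$, $C_0$ and bounded-degree bookkeeping you would otherwise have to thread through both directions; what your route buys is a more transparent geometric picture (tubes for gaps, branching for perfectness) that does not rely on an auxiliary Laplacian criterion.
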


Here $\partial_\infty \Gamma$ refers to the Gromov boundary of $\Gamma$ with a visual metric.  While the proof is notationally involved, we summarize the main ideas.  This summary will use results from \cite{MPR}; the exact theorems and lemmas used in their proof can be found in their paper.

\begin{proof}[Summary of proof of Theorem \ref{MPR Theorem}]
It suffices to prove the result for a hyperbolic approximation $\Gamma'$ in place of $\Gamma$.  The boundary at infinity, $\partial_\infty \Gamma = \partial_\infty \Gamma' $ has strongly bounded geometry.  From this and the uniformly perfect condition, one studies the combinatorics of $\Gamma'$.  By using a refinement of $\Gamma'$, one passes to a hyperbolic approximation $\Gamma''$ for which the map $f \colon V_{\Gamma''} \to \R$ defined by $f(v) = k$ for all $v \in V_k$ satisfies conditions which are sufficient to conclude $h(\Gamma) > 0$ (particularly $|\nabla_{xy} f| \leq c_1$ and $\Delta f(x) \geq c_2 > 0$ for some $c_1, c_2 > 0$).
\end{proof}

\section{Proofs of results}
\label{applications}

We now combine the results in Section \ref{sec results} to prove the main theorem. Recall $X$ and $Y$ are assumed to be quasi-isometric, so one of these spaces satisfies a linear isoperimetric inequality if and only if the other one does.

\begin{proof}[Proof of Theorem \ref{main Theorem}]
Let $f \colon X \to Y$ be a quasi-isometry.  We may assume $f \colon V_X \to V_Y$.  By assumption $Y$ supports a linear isoperimetric inequality with constant $h = h(Y)^{-1} > 0$.  That is, for any finite set $S \subseteq V_Y$, we have $|S| \leq h|\partial S|$.  As $f$ is a quasi-isometry and $X$ has bounded degree, there is an $A>0$ such that for each vertex $v \in V_Y$, we have $|(f_*([X]) - [Y]) (v)| \leq A$.  Hence, for $S \subseteq V_Y$ finite,
\begin{equation*}
|\sum_S (f_*([X]) - [Y])| \leq A |S| \leq Ah |\partial S| = Ah |\partial_1 S|
\end{equation*}
and so  $[f_*([X]) - [Y]] = 0$ by Theorem \ref{0 chain} with $C = Ah$ and $r = 1$.  Thus, by Theorem \ref{Whyte main}, $f$ is within bounded distance of a bilipschitz equivalence.
\end{proof}

We now prove the corollaries of Theorem \ref{main Theorem} and discuss the conditions in Corollary \ref{Cor 1}.

\begin{proof}[Proof of Corollary \ref{Cor 1}]
By Lemma \ref{Tree qi} there exists a quasi-isometry $f \colon X \to Y$ which maps $V_X$ to $V_Y$.  As trees are hyperbolic, we may apply Theorem \ref{MPR Theorem} to conclude that $Y$ supports a linear isoperimetric inequality.   Thus, the result follows from Theroem \ref{main Theorem}.
\end{proof}

We examine the conditions in Corollary \ref{Cor 1}, namely that $X$ and $Y$ are pseudo-regular, visual trees of bounded degree.  Recall the fact that our trees were visual was important for Lemma \ref{Tree qi}.  There is a similar condition in \cite[Theorem 3.16]{MPR} which must be satisfied for our tree to even support a linear isoperimetric inequality.  Pseudo-regularity guarantees that our trees branch regularly; without this condition, one tree, say $X$, will have arbitrarily long segments with no branching.  If this is not the case for $Y$, then the size of the number of vertices in balls of radius $R$ in $Y$ grow exponentially in $R$.  If a bilipschitz map $g$ existed between vertices with bilipschitz constant $C>0$, then one could consider a nonbranching segment of length $M$ in $X$.  Letting $x$ denote its midpoint vertex, the ball $B(g(x),R)$ has at least $c^R$ vertices for some $c>1$ that depends only on $Y$.    Assuming $M > 3CR$, there are at most $2CR$ vertices on our nonbranching segment that could be the image of these $c^R$ vertices.  As $2CR/c^R < 1$ for large $R$, it follows that $g$ cannot exist.  Indeed, similar reasoning shows that such trees cannot even be quasi-isometric.

The bounded degree condition is similar; if the degree of $Y$ is at most $\mu < \infty$, then the size of any ball $B(y, R)$ is bounded by $c\mu^{R+1}$ for some $c > 0$.  Hence, if $X$ had unbounded degree and a bilipschitz equivalence $g$ existed, we could arrive at a contradiction by considering $g(B(x_n,1))$ for a sequence of vertices $x_n$ with strictly increasing degree in $X$.

\begin{proof}[Proof of Corollary \ref{Cor 2}]
Note that Ahlfors regular metric spaces are uniformly perfect.  By a similar construction to that in Lemma \ref{Tree qi}, it is known that $X$ and $Y$ are quasi-isometric (see \cite[Theorem 7.2.1]{BuS} for a detailed proof or \cite[Lemma 3.6]{Li} for a summarized proof).  Theorem \ref{MPR Theorem} shows $Y$ satisfies a linear isoperimetric inequality, so Theorem \ref{main Theorem} applies.
\end{proof}

\end{document}